\theoremstyle{definition}
\theoremstyle{remark}
\numberwithin{equation}{section}
\newtheorem*{satz}{Theorem}
\newtheorem*{lemm}{Lemma}
\begin{document}

\title{Lexicographic Generation of Projective Spaces}
\author{Christoph Hering}
\address{Institute of Mathematics, University of T\"ubingen, 
Auf der Morgenstelle 10, D72076 T\"ubingen, Germany}
\email{hering@uni-tuebingen.de}

\author{Hans-J\"org Schaeffer}
\address{Rossbergstr. 47, D72072 T\"ubingen, Germany}
\email{hjs@hjschaeffer.de}

\begin{abstract} Lexicographic or first choice constructions of geometric objects sometimes lead to amazingly 
good results. Usually it is difficult to determine the precise identity of these geometries. Here we find infinitely many cases where the identification actually can be accomplished. 
\end{abstract}

\keywords{Greedy algorithms, Lexicographic constructions, Nim addition, $\{0,1\}$ matrices, Projective spaces, First choice decisions}

\subjclass{51E15, 05B15}

\maketitle

Let $k,r \in \mathbb{N}$. There exists exactly one $\mathbb{N} \times \mathbb{N}$-matrix $A=(a_{ij})_{i,j\in\mathbb{N}}$ over $\{0,1\}$ such that $a_{ij} = 1$ if and only if none of the following conditions holds,
\begin{itemize}
  \item there exist $i' < i$ and $j' < j$ such that $a_{i,j'} = 
a_{i',j} = a_{i',j'} = 1;$
  \item $\sum_{j'<j} a_{i,j'} \geq k;$
  \item $\sum_{i'<i} a_{i',j} \geq r.$
\end{itemize}
This matrix is called the \emph{naive matrix} of Type $(k,r)$. (In this paper $\mathbb{N} = \{1,2,3,\cdots \}$).

\begin{satz}
Assume $k=3$ and $r=2^n-1, n \geq 1$ and denote 
\[
d=(2^{n+1}-1)(2^{n}-1)/3.
\]
Then the first $d$ rows of $A$ are of the form 
\[
\{a,b,a \oplus b\}, \mbox{ where } ~ 0<a<b<a\oplus b <2^{n+1}
\]
and $\oplus$ is the Nim addition.
\end{satz}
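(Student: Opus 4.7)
The plan is a strong induction on the row index $i$. Let $L_1, L_2, \ldots, L_d$ denote the lines of $PG(n,2)$, realized as triples $\{p, q, p \oplus q\} \subset \{1, \ldots, 2^{n+1}-1\}$, enumerated in the lexicographic order of their increasing tuples. Since $PG(n,2)$ has exactly $(2^{n+1}-1)(2^n-1)/3 = d$ lines, it suffices to show that the $1$-entries of row $i$ of $A$ are precisely the points of $L_i$ for each $1 \le i \le d$. The base case $i = 1$ is immediate.

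For the inductive step, suppose rows $1, \ldots, i-1$ realize $L_1, \ldots, L_{i-1}$ and write $L_i = \{p, q, s\}$ with $p < q < s = p \oplus q$. I walk across row $i$ column by column. For $j < p$, every line through $j$ has smallest element $\le j < p$, hence lex-precedes $L_i$ and has already been used; this exhausts the $r = 2^n - 1$ lines through $j$, so column $j$ is full and $a_{ij} = 0$. At $j \in \{p, q, s\}$, the column retains capacity (witnessed by the pending $L_i$) and no previous row contains two of $p, q, s$ (the unique such line is $L_i$ itself), so $a_{ij} = 1$. For $p < j < q$, the line $\{p, j, p \oplus j\}$ lex-precedes $L_i$ — either its smallest element is already $< p$, or it equals $p$ and its middle element $\min(j, p \oplus j) \le j < q$ — so it was chosen earlier and forces the forbidden rectangle at $(i, j)$ with $j' = p$, giving $a_{ij} = 0$. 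For $j > s$ the row already carries three $1$s.

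The one delicate range is $q < j < s$, where $a_{ij} = 0$ must be witnessed by a previously selected line containing $\{p, j\}$ or $\{q, j\}$. A direct lex comparison shows $\{p, j, p \oplus j\} \prec L_i$ iff $p \oplus j < q$, and $\{q, j, q \oplus j\} \prec L_i$ iff $q \oplus j < p$, so the whole induction reduces to:

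\emph{Key Lemma.} For distinct $p, q \in \mathbb{N}$ with $s := p \oplus q$ and $q < j < s$, one has $p \oplus j < q$ or $q \oplus j < p$.

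This arithmetic fact is the main obstacle. I would prove it by strong induction on the highest set bit $H$ of $s$. Bits above $H$ vanish in all of $p, q, j, s$, and bit $H$ belongs to exactly one of $p, q$. If it lies in $q$, then $j$ shares bit $H$, and stripping that bit yields a smaller instance that is trivial when the reduced $q$ equals $0$ (the second disjunct $j_1 < p$ is then forced by $j < s$) and is handled by the inductive hypothesis otherwise. If bit $H$ lies in $p$ but not in $j$, then $q \oplus j < 2^H \le p$ and the second disjunct holds immediately. If it lies in both $p$ and $j$, stripping bit $H$ reduces to a smaller instance in which the roles of the three inputs may need to be permuted before invoking the inductive hypothesis (splitting further on whether $q < j_1$, $q = j_1$, or $q > j_1$) — this is why the lemma is stated without the assumption $p < q$. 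The base $H \le 1$ is vacuous. Chaining the Key Lemma into the column-by-column walk closes the inductive step.
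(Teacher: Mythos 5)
Your plan is correct, and it reaches the theorem by a genuinely different route, even though it converges on the same arithmetic core. Your Key Lemma is exactly the paper's Greediness Lemma in disguise: with $a=p$, $b=q$, $c=j$, the hypothesis $j<s=p\oplus q$ already yields $p\oplus j<q$ or $q\oplus j<p$ (your extra assumption $q<j$ is not needed), and the paper proves this with a short direct argument on the highest bit at which $c$ and $a\oplus b$ differ --- considerably lighter than your induction on the top set bit with its case permutations, though your version is workable. Where you genuinely diverge is in the surrounding induction. The paper carries only the weak hypothesis that each previous row has the form $\{a,b,a\oplus b\}$ with entries below $2^{n+1}$, and then pins down the next row by counting and covering arguments: completeness and connectability of points, the identity $\abs{\mathcal{P}_0}=r(k-1)+1$, and the fact that the lines through $a$ cover at most $2^{n+1}-3$ points; it never identifies which line appears in which row (that identification only happens afterwards, by counting subgroups of order $4$). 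You instead prove the stronger statement that row $i$ is precisely the $i$-th line of $PG(n,2)$ in lexicographic order, verified by an explicit column-by-column walk in which every $0$ and $1$ is certified by an elementary lex comparison; I checked the walk and each of your five column ranges is handled correctly. Your approach buys the exact identity and ordering of every row and replaces the completeness/connectability bookkeeping with transparent greedy reasoning; the paper's approach carries less information through the induction but its counting arguments are precisely what the authors later generalize to $k=q+1$, $r=(q^n-1)/(q-1)$ over $GF(q)$, where an explicit lexicographic enumeration of lines would be much harder to control.
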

\begin{proof}
Let $L_i$ be the $i$-th row of our matrix A. Clearly $L_1=\{1,2,3\}=\{1,2,1\oplus 2 \}$. (We identify the rows of $A$ with their support, so $L_i = ~\{j ~|~ a_{i,j}=1\}.)$ Let $1\leq m <d $ and assume that $L_1,\cdots,L_m$ are of the form given in the theorem. 
Denote $\mathcal{L}=\{L_1,\cdots,L_m\}$ and $L_{m+1}=\{a,b,c\}$, where $a<b<c$.

Let $\mathcal{P}_0 = \{1,\cdots,2^{n+1}-1\}.$ We call a point $i\in \mathbb{N}$ \emph{complete (with respect to} $\mathcal{L}$) if $\sum_{j \leq m} a_{j,i} = r$. Because $\mid \mathcal{P}_0 \mid\cdot ~ r = d \cdot k > m k $, the points in $\mathcal{P}_0$ cannot all be complete. Therefore $a \in \mathcal{P}_0 $.

Two points $i,j \in \mathbb{N}$ are called \emph{connectable (with respect to} $\mathcal{L}$) if there exists a row in $\mathcal{L}$ containing $i $ and $j$. Note that $\mid \mathcal{P}_0 \mid ~= r(k-1)+1.$ Therefore

(1) If a point $x$ in $\mathcal{P}_0$ is complete, then it is connectable to any other point in $\mathcal{P}_0.$ 

We mention some further properties of our matrix: 

If $1\leq x < c$ and $x\neq a,b$, then $x$ is complete or connectable to $a$ or $b$. Together with (1) we have

(2) If $1\leq x <c$ and $x\leq 2^{n+1}-1$, then $x$ is connectable to $a$ or $b$.

In the same way 

(3) If $1 \leq x < b$ and $x\leq 2^{n+1}-1$, then $x$ is connectable to $a$.

The rows (lines) in $\mathcal{L}$ through $a$ cover at most $(r-1) (k-1) +1 =2^{n+1}-3$ points. A point not covered by these rows cannot be complete by $(1)$. Therefore $b \in \mathcal{P}_0$. Furthermore $a,b<2^{n+1}$ implies $a \oplus b<2^{n+1}$, and hence $a\oplus b\in \mathcal{P}_0$ also. 

Suppose $a\oplus b<c.$ If $a\oplus b$ is connectable to $a$, then $\mathcal{L}$ contains the row $\{a,a\oplus b,b\}$, as $a\oplus (a\oplus b)=b$, a contradiction. Hence $a\oplus b$ is not connectable to $a$ and therefore connectable to $b$ by $(2)$.
 But then $\mathcal{L}$ contains the row $\{b,a\oplus b,a\}$, again a contradiction. Therefore $c \leq a\oplus b<2^{n+1}$, and $c \in \mathcal{P}_0$.

Suppose now that $c<a\oplus b$. We use the greediness of the Nim addition. We have $c \oplus a < b $ or $c \oplus b < a $ by the Greediness Lemma 
%\ref{lem:1} 
below. In the first case $c \oplus a $ is connectable to $a$ by $(3)$, and $\mathcal{L} $ contains the row $\{ a , c \oplus a , (c\oplus a ) \oplus a \} = \{ a , c \oplus a , c \}, $ a contradiction. In the second case $c \oplus b $ is complete and hence connectable to $b$ by the row $\{ c \oplus b , b , ( c \oplus b ) \oplus b \} = \{ c \oplus b , b , c \}, $ which again is a contradiction. 

Therefore there only remains the case $c = a \oplus b $. 
\end{proof}

\begin{lemm}\label{lem:1} Greediness Lemma. Let $ a , b , c \in \mathbb {N} \cup \{0\}.$ If $ c < a \oplus b,$ then $ a \oplus c < b $ or $ b \oplus c < a. $

\end{lemm}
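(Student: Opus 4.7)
The plan is to proceed by induction on the position $k$ of the highest bit of $a \oplus b$. Since the hypothesis $c < a \oplus b$ forces $a \oplus b \geq 1$, this $k$ is well-defined. Bit $k$ is by definition the highest position at which $a$ and $b$ disagree; all bits strictly above $k$ therefore coincide, and after swapping $a$ and $b$ if necessary I may assume $a$ has bit $k$ set and $b$ does not. From $c < a \oplus b < 2^{k+1}$ it follows that $c$ has no bits strictly above position $k$.

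The argument then splits on the bit of $c$ at position $k$. When this bit is $0$, the Nim sum $b \oplus c$ lacks bit $k$ and matches $b$ at every position above $k$; but $a$ matches $b$ above bit $k$ while having bit $k$ itself set, so $b \oplus c < a$ follows immediately, with no recourse to induction. When the bit of $c$ at position $k$ equals $1$, both $a \oplus c$ and $b \oplus c$ agree with $b$ and with $a$, respectively, at position $k$ and at every position above $k$. Consequently the desired inequalities $a \oplus c < b$ and $b \oplus c < a$ reduce to the analogous inequalities for the parts strictly below bit $k$. Writing $a_{\ell}$, $b_{\ell}$, $c_{\ell}$ for those lower parts and noting that $c < a \oplus b$ becomes $c_{\ell} < a_{\ell} \oplus b_{\ell}$ after subtracting the common $2^k$, the inductive hypothesis, applied to a strictly shorter bit-length, finishes the proof.

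The only real obstacle I anticipate is bookkeeping: one must be explicit that the common ``high part'' of $a$ and $b$ (all bits above position $k$) cancels consistently in each of the four Nim sums considered, and that the recursion is well-founded because $a_{\ell}$, $b_{\ell}$, $c_{\ell}$ all lie below $2^k$. A minor subtlety arises in the degenerate inductive step where $a_{\ell} = b_{\ell}$, so that $a_{\ell} \oplus b_{\ell} = 0$; but then no valid $c_{\ell}$ exists and there is nothing to prove. Once these points are in place, the argument is a direct bit-level computation rather than anything structurally subtle, and the name ``Greediness'' is justified by the fact that the highest differing bit of $a \oplus b$ forces the decisive choice.
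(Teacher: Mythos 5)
Your argument is correct: the base observations in both cases check out, the reduction in your second case (where the bit of $c$ at position $k$ is $1$) does cancel the common high part and the common $2^k$ consistently in all four Nim sums, the recursion is well-founded, and you correctly dispose of the degenerate case $a_{\ell}\oplus b_{\ell}=0$ as vacuous. The paper proves the same fact without induction, and the difference is instructive. It takes $j$ to be the \emph{highest bit at which $c$ and $a\oplus b$ disagree} (rather than the highest bit of $a\oplus b$): then $c<a\oplus b$ forces $c_j=0$ and $a_j\oplus b_j=1$, so exactly one of $a,b$ has bit $j$ set, say $a$; above $j$ one has $c_k=a_k\oplus b_k$, i.e.\ $b\oplus c$ agrees with $a$ there, while at position $j$ itself $b\oplus c$ has a $0$ against the $1$ of $a$, giving $b\oplus c<a$ in one step. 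Your Case~1 is exactly this argument localized at position $k$, and your Case~2 is the recursion you need precisely because the top bit of $a\oplus b$ need not be the decisive one; unrolling your induction descends bit by bit until it reaches the paper's $j$. So the two proofs compute the same thing, but the paper's choice of pivot bit eliminates the induction and all of the bookkeeping you flag as the ``only real obstacle.'' Your version costs a little more writing but makes the well-foundedness of the descent completely explicit, which some readers may prefer.
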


\begin{proof} Let $a = (a_0 , a_1 , \cdots ),~ b = (b_0 , b_1 \cdots ) $ and $~c = (c_0 , c_1 , \cdots )$ be the binary expansions (so, e.g., $a = a_0 + a_1 2^1 + a_2 2^2 + \cdots, $ and $a_i \in \{0,1\}$). Let $j$ be the largest number such that $ c_j \neq a_j \oplus b_j$. Then $c < a \oplus b $ implies $c_j = 0 $ and $a_j \oplus b_j = 1 $ so that $a_j = 1 , ~b_j = 0 $ or $b_j = 1 , ~a_j = 0. $ Assume at first $a_j = 1 $ and $b_j = 0. $ For $k > j $ we have $c_k = a_k \oplus b_k $ and hence $a_k = b_k \oplus c_k. $ On the other hand the $j- $th coefficient of $c \oplus b $ is $c_j \oplus b_j = 0, $ while $a_j = 1. $ So $c \oplus b < a. $ 

In the same way, the second possibility $b_j = 1 $ and $a_j = 0 $ implies $c \oplus a < b$.

Here, again, $ \oplus $ stands for the Nim addition.
\end{proof}

$\mathcal{P}_0 \cup \{0\} $ is a subgroup of order $2^{n+1} $ of the elementary abelian $2-$ group $(\mathbb{N} , \oplus).$ The row $ \{a,b,a \oplus b\} $ above consists of the non-zero elements of a subgroup order $4$ of $\mathcal{P}_0 \cup \{0\} . $ As $d$ is the number of subgroups of order $4$ of $\mathcal{P}_0 \cup \{0\}, $ the rows $L_1 , \cdots , L_d$ correspond to the set of all subgroups of order $4$ of this group. Therefore the upper left hand corner of our matrix contains the incidence matrix of a projective space $PG ~ (n,2),$ the columns corresponding to points and the rows to lines. 

Clearly $a_{i,j} = 0 $ for $1\leq i \leq d $ and $j > 2^{n+1} - 1 : = s $ and also for $i > d $ and $1\leq j \leq s ~ (as ~ d \cdot k = d \cdot 3 = s \cdot r). $ Therefore $a _{i + d, j + s} = a _{i j} $ for $i , j \in \mathbb{N}.$

Naive matrices of the form presented here arise from lexicographic or first choice constructions of $(k , r)$-configurations.The Greediness Lemma above is related to results of Conway in \cite {Co}. For a more detailed analysis and further literature see \cite {KHE} and \cite {KHE1}. 

Using some more algebra we can generalize the Theorem to $k=q+1$ and $r=(q^n-1)/(q-1)$, where $q=2^{2^{a}}, a\geq 0$ is a Fermat 2-power. We then obtain incidence matrices of the point-line designs of projective spaces 
%of dimension $n$ 
over $GF(q)$.

\end{document}